\newtheorem{theorem}{Theorem}[section]
\newtheorem{proposition}[theorem]{Proposition}
\newtheorem{lemma}[theorem]{Lemma}
\newtheorem{corollary}[theorem]{Corollary}
\theoremstyle{definition}
\newtheorem{definition}[theorem]{Definition}
\newtheorem{example}[theorem]{Example}
\newtheorem{remark}[theorem]{Remark}
\theoremstyle{theorem}%
\newtheorem*{rep@theorem}{\rep@title}
\newcommand{\newreptheorem}[2]{%
\newenvironment{rep#1}[1]{%
 \def\rep@title{#2 \ref{##1}}%
 \begin{rep@theorem}}%
 {\end{rep@theorem}}}
\numberwithin{equation}{section}
\numberwithin{figure}{section}
\numberwithin{table}{section}
\newcommand{\Z}{\mathbb{Z}}
\newcommand{\Q}{\mathbb{Q}}
\newcommand{\p}{\mathfrak{p}}
\renewcommand{\P}{\mathfrak{P}}
\newcommand{\oo}{\mathfrak{o}}
\newcommand{\B}{\mathcal{B}}
\newcommand{\Gal}{\operatorname{Gal}}
\newcommand{\Int}{\operatorname{Int}}
\newcommand{\id}{\operatorname{id}}
\newcommand{\pr}{\operatorname{pr}}
\newcommand{\hc}{\operatorname{hc}}
\newcommand{\op}{\operatorname{op}}
\newcommand{\sg}{\operatorname{sg}}
\newcommand{\lk}{\operatorname{lk}}
\newcommand{\Coker}{\operatorname{Coker}}
\newcommand{\incl}{\operatorname{incl}}
\newcommand{\tmod}{\mathrm{mod}\ }
\newcommand{\Leg}[2]{\left(\frac{#1}{#2}\right)}
\begin{document}

\title[Genus one homologically fibered knots]
{Every lens space contains a genus one homologically fibered knot}
\author[Y. Nozaki]{Yuta Nozaki}
\subjclass[2010]{Primary 57M27, Secondary 11R45}
\keywords{Homology cobordism; homologically fibered knot; density theorem; Alexander polynomial.}
\address{Graduate School of Mathematical Sciences, the University of Tokyo \\
3-8-1 Komaba, Meguro-ku, Tokyo, 153-8914 \\
Japan}
\email{nozaki@ms.u-tokyo.ac.jp}

\maketitle

\begin{abstract}
 We prove that every lens space contains a genus one homologically fibered knot, which is contrast to the fact that some lens spaces contain no genus one fibered knot.
 In the proof, the Chebotarev density theorem and binary quadratic forms in number theory play a key role.
 We also discuss the Alexander polynomial of homologically fibered knots.
\end{abstract}

\setcounter{tocdepth}{1}
\tableofcontents

\section{Introduction}\label{sec:Intro}

It is well known that every connected oriented closed (namely, compact and without boundary) 3-manifold $X$ contains a fibered knot.
In other words, $X$ admits an open book decomposition with connected binding.
The minimal genus $\op(X)$ of pages of all such open book decompositions of $X$ is a fundamental invariant of $X$.
For instance, $\op(X)=0$ if and only if $X \cong S^3$.
The concept of the invariant $\op(X)$ is similar to the support genus $\sg(\xi)$ introduced by Etnyre and Ozbagci~\cite{EtOz08}, where $\xi$ is a contact structure on $X$.
The invariant $\sg(\xi)$ is defined to be the minimal genus of a page of all open book decompositions (whose bindings are not necessarily connected) of $X$ supporting $\xi$.

Morimoto~\cite{Mor89} started to study genus one fibered knots (GOF-knots) in lens spaces, and Baker~\cite[Theorem~4.3]{Bak14b} completely determined which lens space $L(p,q)$ contains a GOF-knot, that is, we already know when $\op(L(p,q))=1$ holds.
However, computation of $\op(X)$ is difficult in general.

Sakasai~\cite[Remark 6.10]{Sak17} introduced a homological analogue $\hc(X)$ of $\op(X)$, which is roughly defined to be the minimal genus of surfaces $\Sigma$ whose complements are homologically $\Sigma\times[-1,1]$.
Precisely, $\hc(X)$ is defined in terms of homology cobordisms or homologically fibered knots (see Definitions~\ref{def:hc}), and $\hc(X) \leq \op(X)$ holds by definition.
The author was informed by Sakasai the following sufficient condition for $\hc(L(p,q))=1$ when $q$ is odd: $p(p+4)$ or $p(p-4)$ is a quadratic residue $\tmod q$.

The purpose of this paper is to prove the following theorem and corollary which contain new results on the computation of $\hc(X)$ for various 3-manifolds $X$.

\begin{theorem}\label{thm:hc=1}
 $\hc(L(p,q))=1$ holds for any lens space $L(p,q)$, or equivalently $L(p,q)$ contains a genus one homologically fibered knot.
\end{theorem}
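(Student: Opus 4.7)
The plan is to translate the existence of a genus one homologically fibered knot in $L(p,q)$ into a number-theoretic representability problem and then resolve it using the Chebotarev density theorem together with the theory of binary quadratic forms, as advertised in the abstract. Cutting $L(p,q)$ along a hypothetical genus one fiber surface $\Sigma$ realizes it as the mapping torus of a homology cobordism of the once-punctured torus $\Sigma_{1,1}$; since such a cobordism is determined up to homology by its induced action on $H_1(\Sigma_{1,1}) \cong \Z^2$, it corresponds to an integer matrix $A$. A Mayer--Vietoris computation then recovers $H_1$ of the mapping torus as $\Coker(A - I)$ and identifies the linking form on this cokernel with a pairing built from $A$. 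The theorem therefore reduces to finding an integer matrix $A$ with $\det(A - I) = \pm p$ whose induced linking form on $\Coker(A - I) \cong \Z/p\Z$ matches that of $L(p,q)$.

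Spelling out these conditions on the entries of $A$, the determinant constraint fixes $\operatorname{tr}(A)$ (given $\det A$), while the linking-form constraint becomes a congruence modulo $p$. Eliminating one variable recasts the system as the representability of a specific integer---depending explicitly on $p$ and $q$---by a binary quadratic form whose discriminant is controlled by $p$. I would also exploit the freedom $L(p,q) \cong L(p,q')$ when $q' \equiv q^{\pm 1} \pmod p$ to pass to the most convenient representative before performing this reduction. The sufficient condition attributed to Sakasai---that $p(p+4)$ or $p(p-4)$ be a quadratic residue modulo $q$---should emerge as the simplest case of this general representability problem.

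To resolve the representability in full generality I would invoke the Chebotarev density theorem on a ring class field of the relevant quadratic order, producing a prime $\ell$ with any prescribed Frobenius class; such a prime certifies the quadratic-residue conditions needed for representability by the desired form, and genus theory then upgrades representability in some form of the correct genus to representability by the particular form dictated by $L(p,q)$.

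The main obstacle I anticipate is matching both the order $p$ and the linking class $q$ at once: each constraint in isolation is elementary, but together they demand a genuine arithmetic input that must work uniformly over all pairs $(p,q)$. Special cases---small $p$, even $p$, or a square discriminant---will likely require separate ad hoc handling outside the Chebotarev framework, and the existence of primes in arithmetic progressions compatible with the topological requirement on $\det A$ may force a delicate choice of auxiliary quadratic field.
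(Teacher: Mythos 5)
Your plan diverges from the paper's proof at the very first step, and the divergence opens a genuine gap. Cutting $L(p,q)$ along a genus one \emph{homologically} fibered surface does not realize it as a mapping torus: the complement is only required to be a homology cobordism, not a product $\Sigma_{1,1}\times[-1,1]$, and by Baker's theorem many lens spaces contain no genus one fibered knot at all, so they are not genus one open books. Consequently, producing $A\in SL(2,\Z)$ with $\det(A-I)=\pm p$ and the prescribed linking form on $\Coker(A-I)$ only yields an open book $C$ with $\hc(C)\le 1$ and $(H_1(C),\lambda_C)\cong(\Z/p,(q/p))$; since $C\not\cong L(p,q)$ in general, you can transfer this to $L(p,q)$ only by invoking Sakasai's Theorem~\ref{thm:Sakasai} (that $\hc$ depends only on $H_1$ together with the torsion linking form), which your proposal never cites and which is itself a nontrivial input via Borromean surgery equivalence of homology cylinders. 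The paper avoids this entirely: it writes down an explicit genus one surface $\Sigma_{a,b,c,u,v}$ inside $L(p,q)$ itself (Figure~\ref{fig:Sabcuv}) and characterizes, by a Mayer--Vietoris/Seifert matrix computation (Lemma~\ref{lem:Sabcuv}), exactly when its complement is a homology cobordism, namely when $bu^2+(2c+1)uv+av^2=\varepsilon r_k$ and $c^2+c-ab=\varepsilon s_k$ for some solution $(r_k,s_k)$ of $py-qx=1$ and some sign $\varepsilon$.

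The second, larger gap is that the arithmetic core is only gestured at. Either reduction ends with the same kind of problem: represent $\varepsilon r_k$ primitively by a binary quadratic form of discriminant $1+4\varepsilon s_k$, equivalently make $1+4\varepsilon s_k$ a square modulo $4\varepsilon r_k$. The paper achieves this by choosing $r_k$ to be a prime $l\equiv r_0\pmod p$ for which $p(p+4\varepsilon)$ is a quadratic residue (Theorem~\ref{thm:PrimeDivMod}); Chebotarev applies only after verifying that $\sqrt{p(p+4)}$ or $\sqrt{p(p-4)}$ lies outside $\Q(\zeta_p)$ (Lemma~\ref{lem:Weintraub}), and this genuinely fails for $p=5$, where the number-theoretic statement is false for one residue class and $L(5,q)$ must be handled by exhibiting explicit surfaces. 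Your proposal correctly anticipates that ``special cases will require separate handling'' but identifies neither the precise obstruction (a square root landing in a cyclotomic field) nor the exceptional value $p=5$, and it supplies no argument for the general case beyond naming Chebotarev and genus theory. As it stands the proposal is a plausible programme whose two essential steps --- the topological reduction made rigorous via Theorem~\ref{thm:Sakasai}, and the existence of the auxiliary prime --- are both missing.
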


Let $d(G)$ denote the minimum number of generators of a group $G$.

\begin{corollary}\label{cor:List_of_hc}
 The following hold for $g \in \Z_{\geq1}$ and $p, n \in \Z_{\geq2}$.
\begin{enumerate}
 \item $\hc(X)=0$ if and only if $H_1(X) = 0$.
 \item $\hc(X)=g$ if $H_1(X)$ is isomorphic to $\Z^{2g-1}$ or $\Z^{2g}$.
 \item $\hc(X)=1$ if $H_1(X) \cong \Z/p$.
 \item If $X$ is a rational homology $3$-sphere and the subgroup of $H_1(X)$ consisting of $2$-torsions is cyclic \textup{(}possibly trivial\textup{)}, then $\hc(X) \leq d(H_1(X))$.
 \item $\hc(L(p_1,q_1) \sharp L(p_2,q_2))=2$ if $p_1$ divides $p_2$ and neither $q_1q_2$ nor $-q_1q_2$ is a quadratic residue $\tmod p_1$.
 \item Suppose $H_1(X) \cong \Z\oplus\Z/p$.
 Then, $\hc(X)=1$ if there is $q \in \Z$ such that the torsion linking form $\lambda_X$ is isomorphic to $(q/p)$ and $q$ or $-q$ is a quadratic residue $\tmod p$.
 Otherwise, $\hc(X)=2$.
\end{enumerate}
\end{corollary}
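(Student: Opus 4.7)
My plan is to combine Theorem~\ref{thm:hc=1} with three general features of $\hc$ that I would establish first: (a) $\hc$ is a homology-cobordism invariant, immediate from the definition through homology cobordisms with fixed boundary surface; (b) the lower bound $\hc(X)\ge\lceil\operatorname{rank}H_1(X)/2\rceil$, obtained by a Mayer--Vietoris computation showing that the complement of a genus-$g$ surface with $\Sigma_g\times I$-type homology contributes at most $2g$ to $b_1(X)$, together with a refinement using the torsion linking form; and (c) subadditivity $\hc(X_1\sharp X_2)\le\hc(X_1)+\hc(X_2)$, proved by stacking two homologically fibered knots and tubing their binding circles.

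With these, items (1)--(3) are quick. For (1), the "if" direction takes a small embedded $S^2$, while the converse notes that $\hc(X)=0$ expresses $X$ as two homology balls glued along $S^2$, hence a homology sphere. For (2), the upper bound comes from the standard genus-$g$ open book on $\sharp_{2g}(S^1\times S^2)$ combined with (a) applied to a model with the required $H_1$, and the lower bound is (b). Item (3) follows from Theorem~\ref{thm:hc=1} once $X$ is replaced by a homology-cobordant lens space $L(p,q)$. For the upper bounds in (4)--(6), I would decompose $X$ up to homology cobordism as a connected sum of lens spaces (together with one $S^1\times S^2$ summand in (6)) and apply Theorem~\ref{thm:hc=1} to each summand; the 2-torsion hypothesis in (4) is exactly the condition for the torsion linking form to split as a sum of cyclic forms, which is what allows such a decomposition to be realized within a homology-cobordism class.

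The main obstacle will be the lower bounds $\hc\ne 1$ in (5) and (6). The plan is to show that if $\hc(X)=1$ then the torsion linking form $\lambda_X$ admits a presentation matrix coming from a genus-one handlebody complement, which forces a cyclic summand of $\lambda_X$ to be represented by $(r/n)$ with $\pm r$ a quadratic residue mod $n$; this matches precisely the sufficient criterion of Sakasai quoted in the introduction. Applied to $L(p_1,q_1)\sharp L(p_2,q_2)$ with $p_1\mid p_2$, restricting the linking form to the cyclic subgroup of order $p_1$ inside $\Z/p_1\oplus\Z/p_2$ gives a representative of the form $\pm q_1q_2 \pmod{p_1}$, and the obstruction above then translates into $\pm q_1q_2$ being a quadratic residue mod $p_1$, contradicting the hypothesis of (5); for (6) the same mechanism yields the condition on $\pm q$. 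Pinning down the precise algebraic consequence of a genus-one complement on $\lambda_X$ is the main technical step, and I expect it to reduce to a calculation with a $(1,1)$-type Heegaard description of $X$ compatible with the surface.
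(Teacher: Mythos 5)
There are two genuine gaps. The first is your item (a): the invariance you rely on is neither ``immediate from the definition'' nor, even if granted, strong enough for your reductions. The definition of $\hc$ involves homology cobordisms over a bounded surface $\Sigma_{g,1}$ sitting inside $X$; there is no obvious way to transport such a surface through a homology cobordism between closed $3$-manifolds, so invariance is a real theorem, not a triviality. More seriously, your reductions in (2), (3), (4) and (6) replace $X$ by a ``homology-cobordant'' model (a lens space, a connected sum of lens spaces, or $(S^1\times S^2)\sharp L(p,q)$), but a manifold $X$ with $H_1(X)\cong\Z/p$ is in general \emph{not} homology cobordant to any lens space --- already for $p=1$ not every homology sphere is homology cobordant to $S^3$. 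What the paper actually uses is Sakasai's Theorem~\ref{thm:Sakasai}: $\hc(X)$ depends only on the isomorphism class of the pair $(H_1(X),\lambda_X)$. That is the statement that legitimizes replacing $X$ by a model with the same first homology and linking form, and combined with Wall's splitting theorem for linking forms (this is where the cyclic $2$-torsion hypothesis of (4) enters, as you correctly guessed) it gives the upper bounds in (3), (4) and (6) via Theorem~\ref{thm:hc=1} and subadditivity. Without Theorem~\ref{thm:Sakasai} your argument does not close.

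The second gap is that the lower bounds in (5) and (6), which you yourself flag as ``the main technical step,'' are exactly the content of these items and are not supplied; moreover the mechanism you propose (restricting $\lambda_X$ to a cyclic subgroup to produce a representative $\pm q_1q_2$) does not obviously yield that value --- restricting $(q_1/p_1)\oplus(q_2/p_2)$ to a cyclic subgroup gives self-linkings of the form $(q_1x^2+q_2y^2)/p_1$, not $\pm q_1q_2/p_1$. The paper's route for (5) is different: it realizes a putative genus-one surface by an explicit $\Sigma_{a,b,c,u_1,u_2,v_1,v_2}$ with the same Seifert matrix, and applies the determinant criterion $|H_1(Y)|\,|{\det S}|=1$ of Proposition~\ref{prop:SeifertMatrix} to obtain $1\equiv \varepsilon q_1q_2(u_1u_2-v_1v_2)^2 \tmod p_1$, whence $\pm q_1q_2$ is a quadratic residue. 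For (6) the argument is different again and not of linking-form-restriction type: one writes $X$ (after Theorem~\ref{thm:Sakasai}) as the closure of a genus-one homology cylinder with monodromy $A\in SL(2,\Z)$, shows $\det(A-I_2)=0$, invokes Takahashi--Ochiai to conjugate $A$ to a unipotent matrix, and identifies the closure with $(S^1\times S^2)\sharp L(p,\varepsilon)$, forcing $(q/p)\cong(\varepsilon/p)$. You would need to carry out arguments of this kind to complete (5) and (6).
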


Note that $\hc(X)$ is \emph{not} determined only by the isomorphism class of $H_1(X)$.
Indeed, for two 3-manifolds $X_i = L(5,1) \sharp L(5,i)$ ($i=1,2$), we have $\hc(X_1)=1 \neq \hc(X_2)=2$ (\cite[Remark~6.10]{Sak17}).
On the other hand, Sakasai proved the following theorem.

\begin{theorem}[{\cite[Remark~6.10]{Sak17}}]\label{thm:Sakasai}
 The invariant $\hc(X)$ depends only on the isomorphism class of the pair of $H_1(X)$ and the torsion linking form $\lambda_X \colon TH_1(X)\times TH_1(X) \to \Q/\Z$, where $TH_1(X)$ denotes the torsion subgroup of $H_1(X):=H_1(X;\Z)$.
\end{theorem}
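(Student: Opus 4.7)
The plan is to find an algebraic condition $(\ast)_g$ on $(H_1(X), \lambda_X)$ that is equivalent to $\hc(X) \leq g$; the theorem follows immediately, since $(\ast)_g$ refers only to the isomorphism class of the pair.

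First I would extract the algebraic data. Suppose $\Sigma \subset X$ is a genus $g$ Seifert surface whose complement $C = X \setminus \Int N(\Sigma)$ is a homology cylinder. Picking a symplectic basis $\{a_1, b_1, \dots, a_g, b_g\}$ of $H_1(\Sigma) \cong \Z^{2g}$ and using the two pushoff inclusions into $C$, I would record a Seifert-type matrix $V$ whose entries are linking numbers of pushoffs and which satisfies $V - V^T = J$, the standard symplectic matrix of size $2g$. A Mayer--Vietoris computation for $X = N(\Sigma) \cup C$, together with the description of $\lambda_X$ via the Bockstein and Poincar\'e--Lefschetz duality, then yields a presentation of $H_1(X)$ as $\Coker(V + V^T)$ (or the appropriate variant for the precise setup), with $\lambda_X$ expressible purely in terms of $V$ and $J$. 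This phrases $(\ast)_g$ as: there exists an integral $2g \times 2g$ matrix $V$ with $V - V^T = J$ whose associated pair is isomorphic to $(H_1(X), \lambda_X)$.

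For the converse direction I would work inside the fixed manifold $X$. Starting from any Heegaard splitting of $X$ and invoking Reidemeister--Singer stabilization together with handle slides, I would argue that the algebraic data $(V, J)$ supplied by $(\ast)_g$ can be realized by a suitable choice of attaching curves on the Heegaard surface, and then assemble these curves, together with a spanning system of disks, into an embedded genus $g$ homologically fibered surface in $X$ itself. The upshot is that the geometric condition $\hc(X) \leq g$ is equivalent to the algebraic condition $(\ast)_g$ on $(H_1(X), \lambda_X)$, which is manifestly invariant under isomorphism of the pair.

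I expect the main obstacle to be this converse direction: realizing an abstract algebraic presentation by an embedded surface in the \emph{given} manifold $X$ (not merely in some other $3$-manifold sharing the same homological data) is delicate and requires a careful Kirby calculus / handle cancellation argument to close the gap. The forward direction is ultimately a careful Mayer--Vietoris bookkeeping with the linking form, in which tracing the Bockstein and Poincar\'e--Lefschetz duality against the symplectic structure on $\Sigma$ produces the explicit algebraic recipe for $\lambda_X$ in terms of $V$; while individually routine, each of these steps introduces sign conventions and orientation choices that must be reconciled.
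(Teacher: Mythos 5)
The paper offers no proof of this statement: it is quoted from Sakasai \cite[Remark~6.10]{Sak17}, and the only trace of the intended argument visible in the text is the appeal to Borromean surgery equivalence in the proof of Corollary~\ref{cor:List_of_hc}(6). Measured against that argument, your proposal has a genuine gap, and it sits exactly where you suspect: in the converse direction. The known proof rests on Matveev's theorem that two closed oriented $3$-manifolds are related by a sequence of Borromean surgeries ($Y_1$-surgeries) if and only if their pairs $(H_1,\lambda)$ are isomorphic, together with the fact that being the closure of a homology cobordism over $\Sigma_{g,1}$ is preserved under such surgeries; this is what transports a genus~$g$ homologically fibered surface from one manifold to another manifold with the same homological data. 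Your substitute --- Reidemeister--Singer stabilization and handle slides on a Heegaard splitting of $X$ --- cannot play this role: stabilization raises the Heegaard genus without control, whereas the entire point is to produce a surface of genus exactly $g$; and no mechanism is given for converting attaching curves that ``realize $(V,J)$'' into an embedded surface whose complement is a homology cobordism. Without Matveev's theorem (or an equivalent realization result), nothing in your argument distinguishes ``some manifold with this $(H_1,\lambda)$ satisfies $\hc\le g$'' from ``this particular $X$ does,'' and the latter is precisely the content of the theorem; the example $L(5,1)\sharp L(5,1)$ versus $L(5,1)\sharp L(5,2)$ in the introduction shows how delicate this passage from homological data back to the fixed manifold is.

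The forward direction also needs repair. The theorem concerns arbitrary closed oriented $X$, including those with $b_1(X)>0$ (see Corollary~\ref{cor:List_of_hc}(2)), where the linking numbers of pushoffs are simply not defined (and even for rational homology spheres they are rational rather than integral), so an integral Seifert matrix $V$ with $V-V^T=J$ is not the invariant one can extract. The datum that always exists for a homology cobordism $(M,i_+,i_-)$ is the pair of isomorphisms $(i_\pm)_\ast\colon\Z^{2g}\to H_1(M)\cong\Z^{2g}$, i.e.\ a symplectic matrix, with $H_1(C_M)\cong\Coker(A_+-A_-)$ as in the Mayer--Vietoris computation in the proof of Proposition~\ref{prop:SeifertMatrix} (not $\Coker(V+V^T)$, which presents the homology of a double branched cover in the classical setting). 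These bookkeeping issues are fixable; the missing realization step in the converse is not, and it is the heart of the proof.
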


In fact, the torsion linking form of $X_i$ is $(1/5)\oplus(i/5)$, and they are not isomorphic for $i=1,2$.
In order to prove Theorem~\ref{thm:hc=1}, we find a surface $\Sigma \subset L(p,q)$ of genus one whose complement $L(p,q)\setminus\Int(\Sigma\times[-1,1])$ is a homology cobordism.
(It is easy to see that $\hc(X)=0$ if and only if $X$ is an integral homology 3-sphere.)
The following theorem (to be proved in Section~\ref{sec:PrimeDivMod} by using the Chebotarev density theorem) and a well-known fact about binary quadratic forms allow us to construct a desired surface $\Sigma$.

\begin{theorem}\label{thm:PrimeDivMod}
 Let $m \in \Z$ and $n \in \Z_{>0}\setminus\{5\}$ be coprime.
 Then there exist $\varepsilon \in \{1,-1\}$ and an odd prime $l$ such that the congruence equation $nx(x+1) \equiv \varepsilon \mod l$ is solvable and $l \equiv m \mod n$.
\end{theorem}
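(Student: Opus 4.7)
The plan is to apply Chebotarev's density theorem to a suitable abelian extension of $\Q$, after rephrasing the congruence condition as a quadratic residue condition.

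First, for an odd prime $l$ with $\gcd(l,n)=1$, multiplying by $4n$ and completing the square yields
\[
 nx(x+1) \equiv \varepsilon \pmod{l}
 \iff
 (2nx+n)^2 \equiv n(n+4\varepsilon) \pmod{l},
\]
so the congruence is solvable iff $n(n+4\varepsilon)$ is a quadratic residue modulo $l$ (the finitely many primes $l \mid n(n+4\varepsilon)$ can be treated separately). The task becomes: for each $m \in (\Z/n\Z)^{\times}$, find an odd prime $l \equiv m \pmod{n}$ such that $n(n+4)$ or $n(n-4)$ is a quadratic residue modulo $l$.

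Set $F = \Q(\zeta_n)$, $L_\varepsilon = \Q(\sqrt{n(n+4\varepsilon)})$, and $K = FL_1L_{-1}$. Each factor is abelian over $\Q$, so $K/\Q$ is abelian. Under the canonical identifications, $l \equiv m \pmod{n}$ corresponds to $\mathrm{Frob}_l|_F = \sigma_m$, and $n(n+4\varepsilon)$ being a quadratic residue modulo $l$ corresponds to $\mathrm{Frob}_l|_{L_\varepsilon} = \id$. Chebotarev then reduces the problem to an algebraic one: for each $m$, exhibit $\sigma \in \Gal(K/\Q)$ with $\sigma|_F = \sigma_m$ and $\sigma|_{L_\varepsilon} = \id$ for some $\varepsilon \in \{1,-1\}$. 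A short case analysis on the intersection $F \cap L_1L_{-1}$ (which, by Galois correspondence, equals one of $\Q$, $L_1$, $L_{-1}$, $\Q(\sqrt{n^2-16})$, or $L_1L_{-1}$) shows that such a $\sigma$ always exists unless $L_1L_{-1} \subset F$ with both $L_1, L_{-1}$ non-trivial.

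The main obstacle is to rule out this degenerate case for $n \neq 5$. In the degenerate case $\sigma = \sigma_m$ is forced, and the two proper index-two subgroups $\ker(\sigma \mapsto \sigma|_{L_\varepsilon})$ of $(\Z/n\Z)^{\times}$ cannot cover the whole group, producing obstructed $m$. I would eliminate this case as follows. If $L_1 = L_{-1}$, then $n(n+4) \cdot n(n-4) = n^2(n^2-16)$ is a rational square, so $n^2-16$ is a perfect square, forcing $n \in \{4,5\}$ (for $n=4$ one has $L_{-1}=\Q$ trivial; $n=5$ is excluded). If instead $L_1 \ne L_{-1}$, then $L_1L_{-1}$ is biquadratic with third quadratic subfield $\Q(\sqrt{n^2-16})$, which must therefore lie in $\Q(\zeta_n)$. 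Its discriminant must then divide $n$, and since the squarefree part $D_0$ of $n^2-16$ satisfies $D_0 \mid n$ and $D_0 \mid n^2-16$, we get $D_0 \mid \gcd(n^2, n^2-16) \mid 16$, leaving only $D_0 \in \{\pm 1, \pm 2\}$. A finite check of the resulting equations $n^2 - D_0 k^2 = 16$, together with the extra divisibility and parity constraints on $n$, again yields $n \in \{4,5\}$. Hence for $n \neq 5$ the required $\sigma$ always exists, and Chebotarev's theorem produces the desired prime $l$.
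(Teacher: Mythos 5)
Your argument is correct and its skeleton coincides with the paper's: complete the square to turn $nx(x+1)\equiv\varepsilon$ into the condition that $n(n+4\varepsilon)$ is a quadratic residue mod $l$, then apply Chebotarev to the compositum of $\Q(\zeta_n)$ with the quadratic fields $\Q(\sqrt{n(n\pm4)})$, the whole problem reducing to the fact that for $n\neq 5$ at least one of $\sqrt{n(n+4)}$, $\sqrt{n(n-4)}$ fails to lie in $\Q(\zeta_n)$ (Lemmas~\ref{lem:Chebotarev}--\ref{lem:Weintraub} in the paper). Where you genuinely diverge is in the proof of that last fact. The paper's Lemma~\ref{lem:Weintraub} applies Weintraub's explicit description of which $\sqrt{a}$ lie in $\Q(\zeta_n)$ directly to $a=n(n\pm4)$: since $\gcd(n\pm4,n)$ divides $4$, the cofactors $n+4$ and $n-4$ would both have to be squares (up to the $2$-adic cases), which happens only for $n=5$. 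You instead assume both quadratic fields sit inside $\Q(\zeta_n)$, pass to the third quadratic subfield $\Q(\sqrt{n^2-16})$ of the biquadratic compositum, and use the conductor--discriminant criterion to force the squarefree part $D_0$ of $n^2-16$ to divide $16$, leaving a handful of Pell-type equations $n^2-D_0k^2=16$. I checked that this finite verification does close (e.g.\ $D_0=\pm2$ forces $8\mid n$, and then the $2$-adic valuation of $n^2-16$ is $4$ while that of $2k^2$ is odd), so your route is sound; it trades the paper's case split on $v_2(n)$ for a short Diophantine analysis, and both yield exactly the exceptional values $n\in\{4,5\}$, with $n=4$ harmless because $n(n-4)=0$. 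One small point worth making explicit in a final write-up: when $n(n+4\varepsilon)$ is a perfect square (so $L_\varepsilon=\Q$) the quadratic-residue condition is automatic and Dirichlet's theorem alone suffices, which is how the $n=4$ case should be absorbed.
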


In Section~\ref{sec:HomologyCobordism}, we shall review the invariant $\hc(X)$ and prove Theorem~\ref{thm:hc=1}.
Section~\ref{sec:PrimeDivMod} is devoted to proving Theorem~\ref{thm:PrimeDivMod} based on number theory.
In the final section, we focus on Seifert matrices of (homologically fibered) knots which are useful to study $\hc(X)$.
Throughout this paper, $\Sigma_{g,b}$ denotes a connected oriented compact surface of genus $g$ with $b$ boundary components, and $L(p,q)$ denotes the lens space obtained from $S^3$ by Dehn surgery on an unknot along the slope $-p/q$, where $p \geq 2$ and $q$ are coprime.

\subsection*{Acknowledgments}
The author would like to thank Takuya Sakasai and Gw\'ena\"el Massuyeau for their various discussion.
Also, he would like to express his gratitude to Mutsuro Somekawa and Ippei Nagamachi for their useful comments to prove Theorem~\ref{thm:PrimeDivMod}.
The author wishes to express his thanks to Jun Ueki and the referees for their careful reading of the manuscript and for their various comments.
He wishes to be grateful to Institut de Recherche Math\'ematique Avanc\'ee, Universit\'e de Strasbourg, where most of this paper was written, for the hospitality.
Finally, this work was supported by the Program for Leading Graduate Schools, MEXT, Japan and JSPS KAKENHI Grant Number 16J07859.

\section{Homology cobordisms and proof of Theorem~\ref{thm:hc=1}}\label{sec:HomologyCobordism}
We first review homology cobordisms following Garoufalidis and Levine~\cite[Section~2.4]{GaLe05}.

\begin{definition}
 A \emph{homology cobordism over $\Sigma_{g,1}$} is a triad $(M,i_+,i_-)$, where $M$ is an oriented compact 3-manifold and $i_+, i_-\colon \Sigma_{g,1} \to \partial M$ are embeddings satisfying
\begin{itemize}
 \item $i_+$ is orientation-preserving and $i_-$ is orientation-reversing;
 \item $i_+|_{\partial\Sigma_{g,1}} = i_-|_{\partial\Sigma_{g,1}}$;
 \item $i_+(\Sigma_{g,1}) \cup i_-(\Sigma_{g,1}) = \partial M$ and $i_+(\Sigma_{g,1}) \cap i_-(\Sigma_{g,1}) = i_\pm(\partial\Sigma_{g,1})$;
 \item The induced maps $(i_+)_\ast, (i_-)_\ast\colon H_\ast(\Sigma_{g,1}) \to H_\ast(M)$ are isomorphisms.
\end{itemize}
\end{definition}

Note that the fourth condition is equivalent to the condition that $M$ is connected and $i_\pm$ induce isomorphisms on $H_1(-)$.
Sakasai~\cite[Definition~6.9, Remark~6.10]{Sak17} introduced the following invariant of 3-manifolds by using homology cobordisms.

\begin{definition}\label{def:hc}
 For a connected oriented closed 3-manifold $X$, $\hc(X) \in \Z_{\geq0}$ is defined by
 \[\hc(X) := \min\{g\in\Z_{\geq0} \mid \text{$X\cong C_M$ for some $(M,i_+,i_-)$}\},\]
 where $C_M$ is the \emph{closure} of a homology cobordism $(M,i_+,i_-)$ over $\Sigma_{g,1}$ defined by
 \[C_M := M/(i_+(x)\sim i_-(x),\ x\in\Sigma_{g,1}).\]
\end{definition}

\begin{remark}
 The inequalities $\hc(X) \leq \op(X)$ and $\hc(X\sharp Y) \leq \hc(X)+\hc(Y)$ hold for any $X, Y$ by definition.
 The gap $\op(X)-\hc(X)$ can be arbitrarily large.
 Indeed, let $X$ be the connected sum of $n$ copies of the Poincar\'e homology 3-sphere.
 Then we conclude that $\hc(X)=0$ and $\op(X) \geq n$ by Remark~\ref{rem:inequality}.
\end{remark}

For an embedding $\iota\colon \Sigma_{g,1} \xrightarrow{\cong} \Sigma \subset X$, we obtain the triad $(X\setminus \Int(\Sigma\times[-1,1]),\iota^-,\iota^+)$, where $\iota^\pm := \iota\times(\pm1)$.
Whether this triad is a homology cobordism or not depends only on the image of $\iota$, and we simply say that $X\setminus \Int(\Sigma\times[-1,1])$ is a homology cobordism if the triad is so.

\begin{figure}[h]
 \centering
 \includegraphics[height=10em]{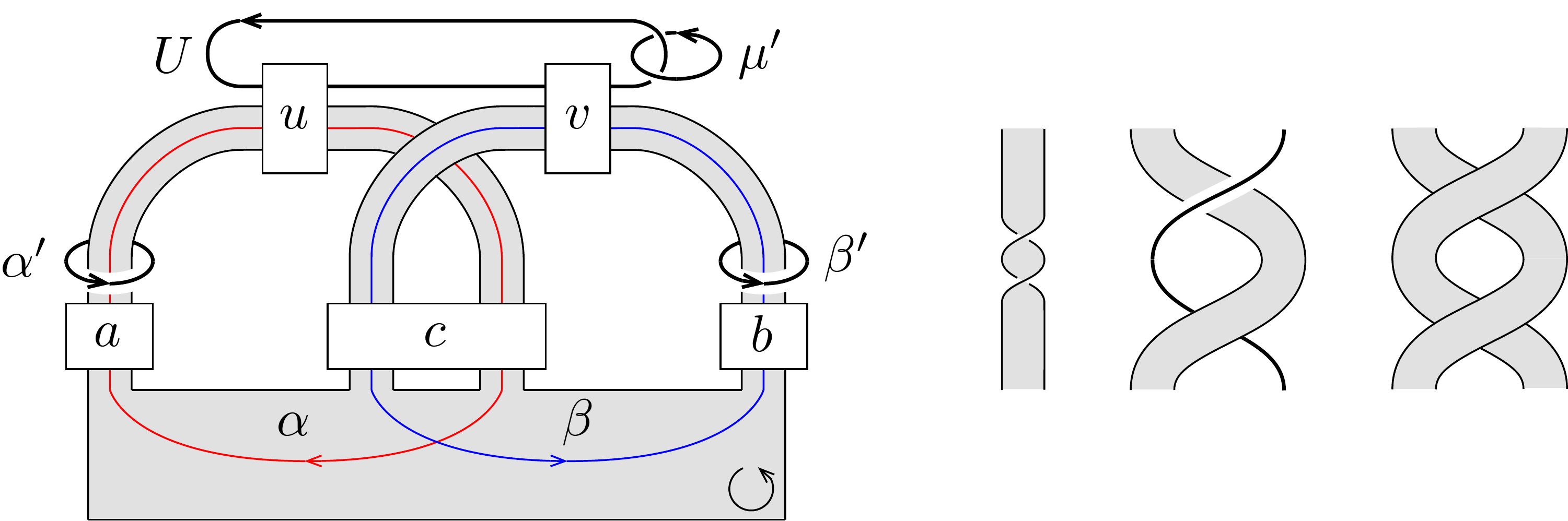}
 \caption{The surface $\Sigma_{a,b,c,u,v} \subset S^3_U(-p/q) = L(p,q)$, where $a,b,c,u,v \in \Z$.
 The box with $n\geq0$ (resp.\ $n<0$) represents suitable $|n|$ positive (resp.\ negative) full twists on the right.}
 \label{fig:Sabcuv}
\end{figure}

The following key lemma is a corollary of Proposition~\ref{prop:SeifertMatrix}, though we give a direct proof in this section.
Note that for coprime integers $p,q$ there is an integer solution $(x,y)=(r_0,s_0)$ of $py-qx=1$, and then the other solutions have the form $(r_k,s_k):=(r_0+kp,s_0+kq)$ for each $k \in \Z$.

\begin{lemma}\label{lem:Sabcuv}
 The complement $L(p,q) \setminus \Int(\Sigma\times[-1,1])$ of the surface $\Sigma=\Sigma_{a,b,c,u,v}$ illustrated in Figure~\textup{\ref{fig:Sabcuv}} is a homology cobordism if and only if there exist $\varepsilon \in \{1,-1\}$ and $k \in \Z$ such that
\begin{align}\label{eq:Sabcuv}
 \begin{pmatrix}
 bu^2+(2c+1)uv+av^2 \\
 c^2+c-ab
\end{pmatrix}
 = \varepsilon
\begin{pmatrix}
 r_k \\
 s_k
\end{pmatrix}.
\end{align}
\end{lemma}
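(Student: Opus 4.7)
The plan is to compute $H_1(N)$, where $N := L(p,q) \setminus \Int(\Sigma\times[-1,1])$ with $\Sigma = \Sigma_{a,b,c,u,v}$, and then check directly when the two inclusions $\iota^\pm$ induce isomorphisms on $H_1$. Writing $L(p,q) = S^3_U(-p/q)$, one first works in the auxiliary exterior $E$ of the disjoint union of (a tubular neighborhood of) $U$ and $\Sigma\times[-1,1]$ in $S^3$: by Alexander duality $H_1(E) \cong \Z^3$, freely generated by the meridians $m_U, m_1, m_2$ of $U$ and of the two bands $\alpha, \beta$ of $\Sigma$. A Seifert disk of the unknot $U$ meets $\alpha, \beta$ algebraically $u, v$ times (i.e.\ $\lk(U,\alpha) = u$ and $\lk(U,\beta) = v$, read off from Figure~\ref{fig:Sabcuv}), so the $0$-framed longitude of $U$ is homologous in $E$ to $u m_1 + v m_2$. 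Gluing in the $-p/q$-surgery solid torus then imposes the single relation $-p\,m_U + q(u m_1 + v m_2) = 0$, yielding $H_1(N) \cong \Z^3/\langle(-qu,-qv,p)^{\mathrm{T}}\rangle$.

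Next, by the linking pairing the classes of the pushoffs $\alpha^\pm, \beta^\pm$ expand in $(m_1, m_2, m_U)$ with $m_U$-coefficients $u, v$ and with $m_1, m_2$-coefficients equal to the corresponding entries of the Seifert matrix $V = \bigl(\begin{smallmatrix} a & c \\ c+1 & b \end{smallmatrix}\bigr)$ of $\Sigma$ read off from Figure~\ref{fig:Sabcuv}; the off-diagonal entries for $\iota^+$ and $\iota^-$ are swapped because $V_{12} - V_{21} = \alpha \cdot \beta = \pm 1$. The homology cobordism condition then reduces to the following claim: $\iota^\pm_*$ is an isomorphism if and only if the $3 \times 3$ matrix formed by the two columns $[\alpha^\pm], [\beta^\pm]$ together with the surgery relation column has determinant $\pm 1$. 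Indeed, unit determinant forces both $\gcd(p, qu, qv) = 1$ (so that $H_1(N) \cong \Z^2$ is free of rank $2$) and surjectivity of $\iota^\pm_*$; a short check shows that the determinants coming from $\iota^+$ and $\iota^-$ agree, so the two conditions collapse into one.

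A cofactor expansion of this $3\times 3$ determinant yields
\[
 \pm\bigl(\,p\,(c^2 + c - ab) - q\,(bu^2 + (2c+1)uv + av^2)\,\bigr),
\]
so the homology-cobordism condition becomes $p(c^2 + c - ab) - q(bu^2 + (2c+1)uv + av^2) = \pm 1$. Since the integer solutions of $ps - qr = 1$ are exactly $(r, s) = (r_k, s_k)$ for $k \in \Z$, this condition translates verbatim into equation~(\ref{eq:Sabcuv}) with $\varepsilon \in \{\pm 1\}$. The main technical obstacle is the careful bookkeeping of sign conventions (orientations of $\alpha, \beta, U$, the choice of positive pushoff, the sign of the surgery slope), all of which enter the linking numbers and the relation column and must be fixed consistently with Figure~\ref{fig:Sabcuv} so that the cross-term $q(2c+1)uv$ emerges with the sign appearing in (\ref{eq:Sabcuv}).
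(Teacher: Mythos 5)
Your proposal is correct and follows essentially the same route as the paper's direct proof: both present $H_1$ of the complement as $\Z^3$ (generated by the meridians $\alpha',\beta',\mu'$ of the two bands and of $U$) modulo the single surgery relation $qu\alpha'-qv\beta'-p\mu'$, express $[\alpha^\pm],[\beta^\pm]$ via the Seifert pairing, and reduce the homology-cobordism condition to the unimodularity of the same $3\times3$ matrix, whose determinant is $p(c^2+c-ab)-q(bu^2+(2c+1)uv+av^2)$. (The paper packages this through a Mayer--Vietoris/cokernel computation and also records a second proof via the Seifert matrix and Proposition~\ref{prop:SeifertMatrix}, but the linear algebra is the same as yours.)
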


\begin{proof}
 Let $N(U)$ denote a (closed) tubular neighborhood of $U$ disjoint from $\Sigma\times[-1,1]$ and $\mu'$.
 Consider the Mayer-Vietoris sequence for 
 \[L(p,q) \setminus \Int(\Sigma\times[-1,1]) = (S^3 \setminus \Int(\Sigma\times[-1,1] \sqcup N(U))) \cup_{\partial} D^2\times S^1\]
 where the solid torus $D^2\times S^1$ is glued by (the isotopy class of) a homeomorphism $\partial(D^2\times S^1) \to \partial N(U)$ corresponding to $
\begin{pmatrix}
 -p & r_0 \\
 q & -s_0
\end{pmatrix}
 \in SL(2,\Z)$.
 Then we have $H_1(L(p,q) \setminus \Sigma) \cong \Coker \iota$, where $\iota$ is the map
 \[\iota\colon H_1(\partial(D^2\times S^1)) \rightarrowtail H_1(S^3 \setminus (\Sigma \sqcup N(U))) \oplus H_1(D^2\times S^1).\]
 By the definition of Dehn surgery, the matrix of $\iota$ with respect to the bases $\{\mu,\lambda\}$ and $\{\alpha',\beta',\mu',\lambda\}$ is
 \[A:=
\begin{pmatrix}
 qu & -s_0u \\
 -qv & s_0v \\
 -p & r_0 \\
 0 & 1
\end{pmatrix},\]
 where $\mu$ and $\lambda$ denote a meridian and longitude on $\partial(D^2\times S^1)$ respectively.
 
 Suppose that the complement is a homology cobordism, namely $\{\alpha_\sigma,\beta_\sigma\}$ is a basis of $\Coker \iota$ for each $\sigma=\pm$, where $\alpha_\pm:=\alpha\times\{\pm1\}$, $\beta_\pm:=\beta\times\{\pm1\}$.
 Then there is a basis $\B_\sigma=\{\alpha_\sigma,\beta_\sigma,\ast\}$ of $H_1(S^3 \setminus (\Sigma \sqcup N(U)))$ for each $\sigma=\pm$.
 Let $Q_\sigma \in GL(3,\Z)$ be the matrix changing the basis $\{\alpha',\beta',\mu'\}$ to $\B_\sigma$.
 Then we see that
 \[Q_+ = 
\begin{pmatrix}
 a & c & \ast \\
 c+1 & b & \ast \\
 u & -v & \ast
\end{pmatrix},\quad
 Q_- = 
\begin{pmatrix}
 a & c+1 & \ast \\
 c & b & \ast \\
 u & -v & \ast
\end{pmatrix}.\]
 Since $\{\alpha_\sigma,\beta_\sigma\}$ is a basis of $\Coker\iota$, the $(2\times2)$-matrix at the bottom of the new matrix $(Q_\sigma^{-1} \oplus I_1)A$ of $\iota$ must belong to $GL(2,\Z)$.
 Hence the absolute value of its $(1,3)$-entry equals 1, namely one has
 \[(Q_\sigma)^{-1}
\begin{pmatrix}
 qu \\ -qv \\ -p
\end{pmatrix}
=
\begin{pmatrix}
 \ast \\ \ast \\ 1
\end{pmatrix}.\]
 Here, since choosing another basis $\B_\pm'=\{\alpha_\sigma,\beta_\sigma,\ast'\}$ corresponds to elementary row operations using the entry 1, we may assume $\ast$'s in the last equality are zero.
 Then we conclude that
 \[Q_+ = 
\begin{pmatrix}
 a & c & qu \\
 c+1 & b & -qv \\
 u & -v & -p
\end{pmatrix},\quad
 Q_- = 
\begin{pmatrix}
 a & c+1 & qu \\
 c & b & -qv \\
 u & -v & -p
\end{pmatrix}.\]
 It follows from $|{\det Q'_\pm}|=1$ that
 \[|p(c^2+c-ab)-q(bu^2+(2c+1)uv+av^2)| = 1.\]
 This completes one direction, and the other is shown by reversing the above argument.
\end{proof}

Let us prove the main theorem by using Lemma~\ref{lem:Sabcuv}, Theorem~\ref{thm:PrimeDivMod} (to be proved later) and the following fact (see, for example, \cite[Section~5.3]{Bak84}):
For $n, \Delta \in \Z$, the congruence equation $z^2 \equiv \Delta \mod 4n$ is solvable if and only if there is a binary quadratic form $f(x,y)$ with discriminant $\Delta$ such that $f(x,y)=n$ has a primitive solution.

\begin{proof}[Proof of Theorem~\textup{\ref{thm:hc=1}}]
 We first consider the case $p=5$.
 Since $L(5,1) \cong L(5,4)$ and $L(5,2) \cong L(5,3)$, it is enough to see the cases $q=1,3$.
 Lemma~\ref{lem:Sabcuv} shows that $L(5,1)\setminus\Sigma_{0,0,0,1,1}$ is a homology cobordism.
 Indeed, $(r_0,s_0)=(-1,0)$ and $\varepsilon=-1$ satisfy equation \eqref{eq:Sabcuv}.
 Similarly, letting $(r_0,s_0)=(3,2)$ and $\varepsilon=1$, $L(5,3)\setminus\Sigma_{0,0,1,1,1}$ is a homology cobordism.
 
 Suppose $p \neq 5$.
 Putting $m=r_0$, $n=p$ in Theorem~\ref{thm:PrimeDivMod}, we conclude that there are $\varepsilon \in \{1,-1\}$ and $k \in \Z$ such that $px(x+1) \equiv \varepsilon \mod r_k$ has a solution $x=x_0$.
 Here, $z_0:=1+2x_0$ satisfies $z_0^2 \equiv 1+4\varepsilon s_k \mod 4\varepsilon r_k$.
 By the above fact, there is a quadratic form $f(x,y)=a'x^2+b'xy+c'y^2$ such that $b'^2-4a'c' = 1+4\varepsilon s_k$ and $f(x,y)=\varepsilon r_k$ has a solution $(x,y)=(u,v)$.
 Then, integers $a:=c'$, $b:=a'$ and $c:=(b'-1)/2$ satisfy 
 \[\begin{pmatrix}
 bu^2+(2c+1)uv+av^2 \\
 c^2+c-ab
\end{pmatrix}
=\varepsilon
\begin{pmatrix}
 r_k \\
 s_k
\end{pmatrix}.\]
 Therefore, we conclude from Lemma~\ref{lem:Sabcuv} that $\hc(L(p,q))=1$.
\end{proof}

\begin{remark}
 It follows from the proof of the above fact about quadratic forms that integers $a,b,c,u,v$ are represented by $z_0,\varepsilon,r_k,s_k$.
 However, it seems difficult to represent these integers explicitly by $p,q$.
\end{remark}

We sum up values or estimates of $\hc(X)$ for various $X$'s as a corollary of Theorem~\ref{thm:hc=1}.
Recall that $d(G)$ denotes the minimum number of generators of a group $G$.
(We set $d(\{1\})=0$ by convention.)

\begin{repcorollary}{cor:List_of_hc}
 The following hold for $g \in \Z_{\geq1}$ and $p, n \in \Z_{\geq2}$.
\begin{enumerate}
 \item $\hc(X)=0$ if and only if $H_1(X) = 0$.
 \item $\hc(X)=g$ if $H_1(X)$ is isomorphic to $\Z^{2g-1}$ or $\Z^{2g}$.
 \item $\hc(X)=1$ if $H_1(X) \cong \Z/p$.
 \item If $X$ is a rational homology $3$-sphere and the subgroup of $H_1(X)$ consisting of $2$-torsions is cyclic \textup{(}possibly trivial\textup{)}, then $\hc(X) \leq d(H_1(X))$.
 \item $\hc(L(p_1,q_1) \sharp L(p_2,q_2))=2$ if $p_1$ divides $p_2$ and neither $q_1q_2$ nor $-q_1q_2$ is a quadratic residue $\tmod p_1$. 
 \item Suppose $H_1(X) \cong \Z\oplus\Z/p$.
 Then, $\hc(X)=1$ if there is $q \in \Z$ such that the torsion linking form $\lambda_X$ is isomorphic to $(q/p)$ and $q$ or $-q$ is a quadratic residue $\tmod p$.
 Otherwise, $\hc(X)=2$.
\end{enumerate}
\end{repcorollary}

\begin{remark}\label{rem:inequality}
 We have well-known inequalities related to Corollary~\ref{cor:List_of_hc}~(4):
\begin{itemize}
 \item $d(H_1(X)) \leq 2\hc(X) \leq 2\op(X)$ and
 \item $d(H_1(X)) \leq d(\pi_1(X)) \leq g(X) \leq 2\op(X)$,
\end{itemize}
 where $g(X)$ denotes the Heegaard genus of $X$.
 The first inequality is found in \cite[Remark~6.1]{Sak17}.
 The second inequality in the second row follows that a Heegaard splitting of genus $g$ gives a presentation of $\pi_1(X)$ with $g$ generators.
 Also, when $X$ admits an open book decomposition with a page $\Sigma_{g,1}$, the union of two pages divides $X$ into two handlebodies of genus $2g$, and thus the third inequality holds.
 
\end{remark}

\begin{proof}[Proof of Corollary~\textup{\ref{cor:List_of_hc}}~\textup{(1)--(4)}]
 We first prove (1).
 If $H_1(X)=0$, then the complement of any disk in $X$ is a homology cobordism over $\Sigma_{0,1}$.
 The converse follows from the inequality $d(H_1(X)) \leq 2\hc(X)$ in Remark~\ref{rem:inequality}.
 (2) is due to Sakasai~\cite{Sak17}.
 (3) is a direct consequence of Theorems~\ref{thm:hc=1} and \ref{thm:Sakasai} since a non-degenerate symmetric bilinear form on $\Z/p$ is isomorphic to $(q/p) = \lambda_{L(p,q)}$ for some $q$.
 
 We next prove (4).
 Since $H_1(X)$ is finite, it is isomorphic to $\bigoplus_{i=1}^s \Z/p_i$ for some $p_i$'s with $p_i \mid p_{i+1}$, where $s:=d(H_1(X))$.
 It follows from \cite[Theorem~(4)]{Wal63} that $\lambda_X$ is isomorphic to $\bigoplus_{i=1}^s (q_i/p_i)$ for some $q_i$'s, hence $\lambda_X$ is isomorphic to $\lambda_{L(p_1,q_1) \sharp\cdots\sharp L(p_s,q_s)}$.
 Therefore, Theorem~\ref{thm:hc=1} shows $\hc(X) \leq s$.
\end{proof}

The proofs of (5) and (6) are given in the end of Section~\ref{sec:HFK} since we need results of Section~\ref{sec:HFK}.

\section{Number theory and proof of Theorem~\ref{thm:PrimeDivMod}}\label{sec:PrimeDivMod}
The goal of this section is to prove Theorem~\ref{thm:PrimeDivMod}, which was used in the proof of Theorem~\ref{thm:hc=1}.
We briefly review the Artin symbol of a prime ideal only for the case of abelian extensions following \cite[Chapter~X, Section~1]{Lan94} and \cite[Chapter~VI, Section~7]{Neu99}.
Let $k$ be a number field (assumed to be finite over $\Q$) and $K/k$ a finite abelian extension.
Let $\p\neq0$ be a prime ideal of $\oo_k$ unramified in $K/k$, where $\oo_k$ denotes the ring of integers of $k$.
Let $\P$ be a prime ideal of $\oo_K$ lying above $\p$, that is, $\P \cap k = \p$.
Then there exists a unique element of the decomposition group $\{\sigma \in \Gal(K/k) \mid \sigma(\P)=\P\}$ of $\P$ satisfying $\sigma(x) \equiv x^{N(\p)} \mod \P$ for all $x \in \oo_K$, where $N(\p)$ denotes the order of the residue field $\oo_k/\p$.
The element is independent of the choice of $\P$.
It is denoted by $\Leg{K/k}{\p}$ and called the \emph{Artin symbol} of $\p$.

\begin{example}[{\cite[Chapter~X, Section~1]{Lan94}}]\label{ex:ArtinSymbol}
 We review two well-known examples used in this paper.
 Let $a \in \Z$ be not a square, $\Delta_a$ be the discriminant of $\Q(\sqrt{a})$, $p$ be a prime with $\gcd(\Delta_a,p)=1$.
 Then, $p\Z$ is unramified and we deduce
 \[\Leg{\Q(\sqrt{a})/\Q}{p\Z} = 
\begin{cases}
 \id_{\Q(\sqrt{a})} & \text{if $\Delta_a$ is a quadratic residue $\tmod p$,} \\
 [\sqrt{a} \mapsto -\sqrt{a}\,] &  \text{otherwise.}
\end{cases}\]
 
 Next, for a primitive $n$th root of unity $\zeta_n$ and a prime $p$ with $\gcd(n,p)=1$, the ideal $p\Z$ is unramified and $\Leg{\Q(\zeta_n)/\Q}{p\Z} = [\zeta_n \mapsto \zeta_n^p]$ holds.
\end{example}

\begin{lemma}\label{lem:Chebotarev}
 Let $K_1/k$, $K_2/k$ be abelian extensions, $C$ be a subset of $G_1\times G_2$ with $\iota^{-1}(C) \neq \emptyset$, where $G_i$ denotes $\Gal(K_i/k)$ and $\iota\colon \Gal(K_1K_2/k) \to G_1\times G_2$ is defined by $\iota(\sigma)=(\sigma|_{K_1},\sigma|_{K_2})$.
 Then
 \[S:=\left\{ \p \biggm| \text{\parbox{11em}{$\p$ is a prime ideal of $\oo_k$\\ unramified in $K_1$ and $K_2$} and $\left( \Leg{K_1/k}{\p}, \Leg{K_2/k}{\p} \right) \in C$} \right\}\] 
 is an infinite set.
\end{lemma}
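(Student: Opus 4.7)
The plan is to reduce the assertion to a single application of the Chebotarev density theorem by passing to the compositum $K_1K_2$. Since $K_1$ and $K_2$ are both abelian over $k$, the compositum $K_1K_2$ is itself a finite abelian extension of $k$, so its Galois group is an abelian group and $\iota\colon \Gal(K_1K_2/k)\to G_1\times G_2$ is a well-defined injective group homomorphism whose image is precisely the subgroup of pairs $(\tau_1,\tau_2)$ that agree on $K_1\cap K_2$.

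The first step is to record the functoriality of the Artin symbol under restriction: for any prime ideal $\p$ of $\oo_k$ that is unramified in $K_1K_2$ (equivalently, unramified in both $K_1$ and $K_2$), one has
\[
\iota\!\left(\Leg{K_1K_2/k}{\p}\right) = \left(\Leg{K_1/k}{\p},\Leg{K_2/k}{\p}\right).
\]
This is immediate from the defining congruence $\sigma(x)\equiv x^{N(\p)}\bmod \P$, since the restriction of such a $\sigma$ to $K_i$ satisfies the same congruence modulo $\P\cap \oo_{K_i}$.

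Next, by hypothesis there exists some $\sigma_0\in\Gal(K_1K_2/k)$ with $\iota(\sigma_0)\in C$. Because $\Gal(K_1K_2/k)$ is abelian, the conjugacy class of $\sigma_0$ is the singleton $\{\sigma_0\}$, so the Chebotarev density theorem applied to the abelian extension $K_1K_2/k$ yields infinitely many prime ideals $\p$ of $\oo_k$ (in fact a set of positive Dirichlet density $1/[K_1K_2:k]$) that are unramified in $K_1K_2$ and satisfy $\Leg{K_1K_2/k}{\p}=\sigma_0$. By the compatibility displayed above, each such $\p$ lies in $S$, proving that $S$ is infinite.

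The main point is simply to recognise that the two individual Artin symbols are governed simultaneously by the single Artin symbol in the compositum; once that is noted, Chebotarev does all the work and no arithmetic is required. No serious obstacle is anticipated, only the bookkeeping of checking that the hypothesis $\iota^{-1}(C)\neq\emptyset$ is exactly what is needed to produce a non-empty target conjugacy class in $\Gal(K_1K_2/k)$.
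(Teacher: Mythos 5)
Your proof is correct and follows essentially the same route as the paper: pass to the abelian compositum $K_1K_2$, apply the Chebotarev density theorem there, and use the compatibility (consistency) of the Artin symbol under restriction to transfer the conclusion to $K_1$ and $K_2$. The only cosmetic difference is that you apply Chebotarev to a single element $\sigma_0\in\iota^{-1}(C)$ rather than to all of $\iota^{-1}(C)$ at once, which changes nothing.
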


\begin{proof}
 First note that $K_1K_2/k$ is a Galois extension with $G=\Gal(K_1K_2/k)$ abelian, and the homomorphism $\iota\colon G \to G_1\times G_2$ is injective.
 We define the set $S'$ by
 \[S':= \left\{ \p \biggm| \text{\parbox{10em}{$\p$ is a prime ideal of $\oo_k$\\ unramified in $K_1K_2$} and $\Leg{K_1K_2/k}{\p} \in \iota^{-1}(C)$} \right\}.\]
 By the Chebotarev density theorem (see, for example, \cite[Chapter~VIII, Theorem~10]{Lan94}, \cite[Chapter~VII, Theorem~13.4]{Neu99}), we have $d(S') = |\iota^{-1}(C)|/|G| > 0$, where $d(S')$ is the Dirichlet density of $S'$ (see \cite[Chapter~VIII, Section~4]{Lan94}, \cite[Chapter~VII, Section~13]{Neu99}).
 Hence $S'$ is infinite.
 On the other hand, the consistency property \cite[Chapter~X, Section~1]{Lan94} asserts $S' \subset S$, and thus $S$ is also infinite.
\end{proof}

\begin{lemma}\label{lem:QuadResMod}
 For $a,m \in \Z$, $n \in \Z_{>0}$ with $\sqrt{a} \notin \Q(\zeta_n)$ and $\gcd(m,n)=1$,
 \[S := \{p \mid \text{$p$ is a prime, $a$ is a quadratic residue $\tmod p$ and $p \equiv m \mod n$}\}\]
 is an infinite set.
\end{lemma}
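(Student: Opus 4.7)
The plan is to specialize Lemma~\ref{lem:Chebotarev} to the two abelian extensions $K_1 = \Q(\sqrt{a})$ and $K_2 = \Q(\zeta_n)$ of $k = \Q$. First I would verify linear disjointness: since $[K_1 : \Q] = 2$, any nontrivial intersection $K_1 \cap K_2$ would force $K_1 \subset K_2$, contradicting the hypothesis $\sqrt{a} \notin \Q(\zeta_n)$. Hence $K_1 \cap K_2 = \Q$, so the natural injection $\iota \colon \Gal(K_1 K_2 / \Q) \to G_1 \times G_2$ is actually a bijection, and every element of $G_1 \times G_2$ lies in its image.

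Next I would translate the two defining conditions of $S$ into Artin-symbol conditions using Example~\ref{ex:ArtinSymbol}. For a rational prime $p$ unramified in both extensions (that is, $p \nmid n \Delta_a$), the symbol $\Leg{K_1/\Q}{p\Z}$ is trivial if and only if $\Delta_a$ is a quadratic residue $\mod p$; since $\Delta_a \in \{a, 4a\}$, this is equivalent to $a$ being a quadratic residue $\mod p$. Similarly, $\Leg{K_2/\Q}{p\Z}$ equals the automorphism $\tau_m \colon \zeta_n \mapsto \zeta_n^m$ precisely when $p \equiv m \mod n$; the assumption $\gcd(m, n) = 1$ ensures that $\tau_m$ is a genuine element of $G_2$.

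With these dictionaries in hand, I would apply Lemma~\ref{lem:Chebotarev} to the singleton $C = \{(\id_{K_1}, \tau_m)\} \subset G_1 \times G_2$. Since $\iota$ is bijective, $\iota^{-1}(C)$ is nonempty, and the lemma produces infinitely many unramified primes $p$ whose pair of Artin symbols lies in $C$. By the translations above, these are exactly the primes in $S$ up to the finitely many rational primes dividing $n \Delta_a$, whose omission does not affect infiniteness.

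The one substantive step is the linear-disjointness verification, since the usefulness of Lemma~\ref{lem:Chebotarev} applied to the compositum $K_1 K_2$ hinges on $\iota$ being surjective, and this is exactly what the hypothesis $\sqrt{a} \notin \Q(\zeta_n)$ provides. The rest is a straightforward dictionary between the elementary number-theoretic conditions defining $S$ and the Galois-theoretic conditions controlled by Lemma~\ref{lem:Chebotarev}.
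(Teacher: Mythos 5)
Your proposal is correct and follows essentially the same route as the paper: specialize Lemma~\ref{lem:Chebotarev} to $K_1=\Q(\sqrt{a})$, $K_2=\Q(\zeta_n)$ with the singleton $C=\{(\id_{K_1},[\zeta_n\mapsto\zeta_n^m])\}$, use $\sqrt{a}\notin\Q(\zeta_n)$ to get $K_1\cap K_2=\Q$ so that $\iota$ is bijective, and translate the Artin-symbol conditions via Example~\ref{ex:ArtinSymbol}. The only (harmless) extra detail you supply is the explicit observation that $\Delta_a\in\{a,4a\}$ makes the quadratic-residue conditions for $\Delta_a$ and $a$ equivalent.
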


\begin{proof}
 Put $k=\Q$, $K_1=k(\sqrt{a})$, $K_2=k(\zeta_n)$ and $C=\{(\id_{K_1},[\zeta_n \mapsto \zeta_n^m])\}$.
 Since $K_1 \cap K_2 =k$, the map $\iota$ in Lemma~\ref{lem:Chebotarev} is an isomorphism, and thus $\iota^{-1}(C) \neq \emptyset$.
 It follows from Lemma~\ref{lem:Chebotarev} that 
 \[ \left\{ p \biggm| \text{\parbox{7.5em}{$p>\max\{\Delta_a,n\}$\\ is a prime}, $\Leg{K_1/k}{p\Z}=\id_{K_1}$ and $\Leg{K_2/k}{p\Z}=[\zeta_n \mapsto \zeta_n^m]$} \right\} \]
 is infinite.
 Here, Example~\ref{ex:ArtinSymbol} implies that this set is contained in $S$.
\end{proof}

\begin{lemma}\label{lem:Weintraub}
 For a positive integer $n \neq 5$, $\sqrt{n(n+4)}$ or $\sqrt{n(n-4)}$ does not belong to the $n$th cyclotomic field $\Q(\zeta_n)$.
\end{lemma}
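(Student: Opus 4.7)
I will argue by contradiction: suppose $n\in\Z_{>0}\setminus\{5\}$ and that both $\sqrt{n(n+4)}$ and $\sqrt{n(n-4)}$ lie in $\Q(\zeta_n)$; I want to derive that $n=4$, and then dispose of that case by hand.

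The set $\{d\in\Q^\times/(\Q^\times)^2 : \sqrt{d}\in\Q(\zeta_n)\}$ is a subgroup, so the hypothesis implies that the product $n(n+4)\cdot n(n-4)=n^2(n^2-16)$ represents a square class in $\Q(\zeta_n)$. Provided $n\neq 4$, this means $\sqrt{n^2-16}\in\Q(\zeta_n)$. Let $d_0$ denote the signed squarefree part of $n^2-16$. By the Kronecker--Weber theorem, $\Q(\sqrt{d_0})\subset\Q(\zeta_n)$ is equivalent to the conductor of $\Q(\sqrt{d_0})$ (namely $|d_0|$ when $d_0\equiv 1\pmod 4$, and $4|d_0|$ otherwise) dividing $n$.

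The first step is to show $d_0\in\{\pm1,\pm2\}$: any odd prime $p\mid d_0$ would divide both $n$ (via the conductor) and $n^2-16$, hence divide $16$, which is impossible. The cases $d_0=\pm 1$ amount to solving $(n-k)(n+k)=16$ or $n^2+k^2=16$, whose only positive-integer solutions are $n\in\{4,5\}$, both excluded. The case $d_0=-2$ is ruled out because $n^2+2k^2=16$ forces $n\leq 4$, incompatible with the conductor condition $8\mid n$ coming from $\Q(\sqrt{-2})$.

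I expect the case $d_0=2$ to be the main obstacle, since $n^2-2k^2=16$ is an honest Pell equation with infinitely many positive solutions (all satisfying $n\equiv 4\pmod 8$). Here the conductor of $\Q(\sqrt{2})$ is $8$, so the condition forces $8\mid n$; writing $n=8t$ gives $n^2-16=16(4t^2-1)$, so the relation $2k^2=16(4t^2-1)$ yields $k^2=8(4t^2-1)$ with $4t^2-1$ odd, contradicting the fact that $v_2(k^2)$ must be even. Finally, the case $n=4$ is checked directly: $\sqrt{n(n+4)}=4\sqrt{2}$, and $\sqrt{2}\notin\Q(\zeta_4)=\Q(i)$ since $\Q(i)$ contains no real quadratic subfield. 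Together these eliminate every $n\in\Z_{>0}\setminus\{5\}$.
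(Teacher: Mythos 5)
Your proof is correct, and it takes a genuinely different route from the paper's. The paper quotes the full description of which integers become squares in $\Q(\zeta_n)$ (products of a square with the primes $(-1)^{(p-1)/2}p$ for odd $p\mid n$, together with $-1$ and/or $2$ according to $v_2(n)$) and runs a case analysis on $v_2(n)$; in each case the coprimality of $n$ with $n\pm4$ forces both $n+4$ and $n-4$ (or $n/4\pm1$) to be perfect squares, which fails for $n\neq 5$. You instead use that the rational square classes trivialized by $\Q(\zeta_n)$ form a group, multiply the two hypothesized square roots to reduce to the single condition $\sqrt{n^2-16}\in\Q(\zeta_n)$, and then invoke the conductor criterion, which pins the squarefree part $d_0$ of $n^2-16$ down to $\{\pm1,\pm2\}$; each case is then eliminated by elementary Diophantine reasoning, the only delicate one being $d_0=2$, which your $2$-adic valuation computation handles correctly (the Pell equation $n^2-2k^2=16$ has infinitely many solutions, but none with $8\mid n$). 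Your version needs only Kronecker--Weber and the conductor of a quadratic field rather than the precise structure theorem, at the price of having to dispose of the Pell-equation case; it also isolates $n=5$ transparently as the unique nontrivial solution of $(n-k)(n+k)=16$. The separate treatments of $n=4$ and of the small $n$ with $n^2<16$ (where $d_0<0$) are also correct.
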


\begin{proof}
 The cases $n=1,2$ are obvious since $\Q(\zeta_n) = \Q$.
 For $n>2$, there are four cases: (i) $v_2(n)=0$, (ii) $v_2(n)=1$, (iii) $v_2(n)=2$, (iv) $v_2(n)\geq3$, where $v_2$ is the 2-adic valuation for $\Z$.
 We only discuss (i) and (iv), and the other cases are shown similarly.
 In general, $\sqrt{a} \in \Q(\zeta_n)$ if and only if $a \in \Z$ is a product of a square and some integers in
 $$\{(-1)^{(p-1)/2}p \mid \text{$p$ is an odd prime factor of $n$}\} \cup 
\begin{cases}
 \emptyset & \text{if $v_2(n)<2$,} \\
 \{-1\} & \text{if $v_2(n)=2$,} \\
 \{-1,2\} & \text{if $v_2(n)>2$}
\end{cases}
 $$
 (see, for example, \cite[Corollary~4.5.4]{Wei09}).
 
 (i) Assume that $\sqrt{n(n+4)}, \sqrt{n(n-4)} \in \Q(\zeta_n)$.
 Then $n(n\pm4)$ must be certain products as mentioned above.
 It follows from $\gcd(n\pm4,n)=1$ that both $n+4$ and $n-4$ are squares, though that is impossible except when $n=5$.
 
 (iv) Assume that $\sqrt{n(n\pm4)} \in \Q(\zeta_n)$.
 Since $\gcd((n\pm4)/4,n)=1$, the same argument shows that $n/4\pm1$ are squares.
 This is a contradiction.
\end{proof}

\begin{proof}[Proof of Theorem~\ref{thm:PrimeDivMod}]
 It follows from Lemmas~\ref{lem:QuadResMod} and \ref{lem:Weintraub} that there are $\varepsilon \in \{1,-1\}$ and an odd prime $l$ such that $n(n+4\varepsilon)$ is a quadratic residue $\tmod l$ and $l \equiv m \mod n$.
 Therefore, by $\gcd(l,n)=1$, $n(n+4\varepsilon)$ is a quadratic residue $\tmod l$ if and only if $(2nx+n)^2 \equiv n^2+4\varepsilon n \mod l$ is solvable.
 Moreover, this congruence equation is equivalent to $nx(x+1) \equiv \varepsilon \mod l$.
\end{proof}

\begin{remark}
 The above proof (and the case $p=5$ in the proof of Theorem~\ref{thm:hc=1}) claims that for any $L(p,q')$ there exists an odd integer $q$ such that $L(p,q)$ is homeomorphic to $L(p,q')$ and $p(p+4)$ or $p(p-4)$ is a quadratic residue $\tmod q$, which is Sakasai's sufficient condition in Section~\ref{sec:Intro}.
 
 Even if $n=5$, Theorem~\ref{thm:PrimeDivMod} holds for $m \equiv 1,3,4 \mod 5$.
 Indeed, one can choose $(\varepsilon,l) = (-1,11),(1,3),(1,19)$ respectively.
 However, in the case $m \equiv 2 \mod 5$, Theorem~\ref{thm:PrimeDivMod} fails since neither $5(5+4)$ nor $5(5-4)$ is a quadratic residue $\tmod m$ by the quadratic reciprocity law.
\end{remark}

\section{Homologically fibered knots and the Alexander polynomial}\label{sec:HFK}

The aims of this section are to complete the proof of Corollary~\ref{cor:List_of_hc} and to characterize homologically fibered knots in a rational homology 3-sphere in terms of the Alexander polynomial.
These are achieved by Proposition~\ref{prop:SeifertMatrix} below.

\begin{figure}[h]
 \centering
 \includegraphics[height=8em]{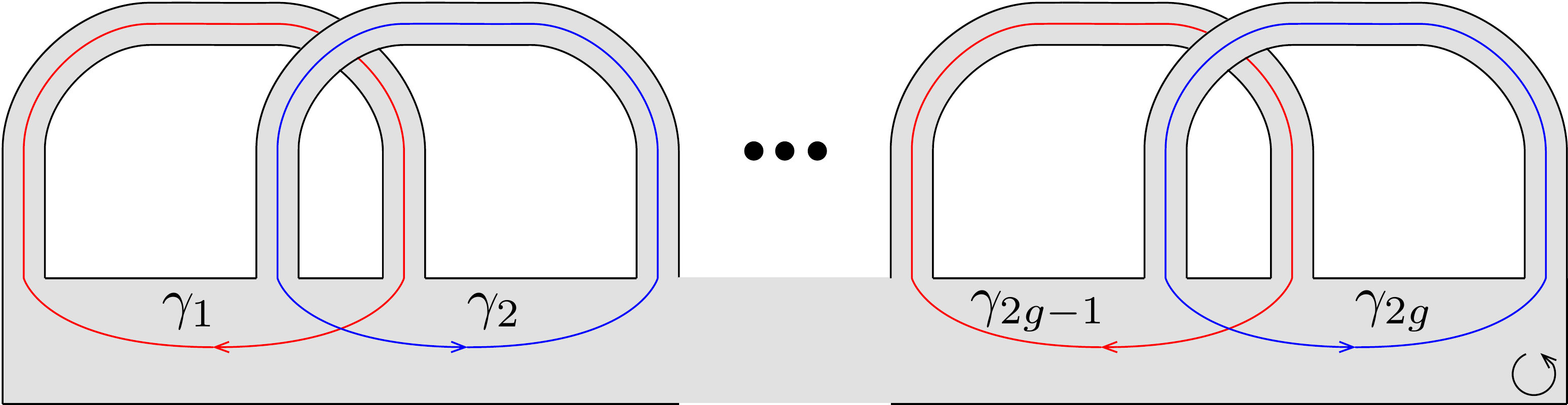}
 \caption{Oriented curves $\gamma_1,\dots,\gamma_{2g}$ on the surface $\Sigma_{g,1}$.}
 \label{fig:gammaOnSigma}
\end{figure}

Let $Y$ be a rational homology 3-sphere, $\iota \colon \Sigma_{g,1} \xrightarrow{\cong} \Sigma \subset Y$ an embedding.
Let $S=(s_{ij})_{i,j} \in M_{2g}(\Q)$ be the Seifert matrix of $\Sigma$ with respect to $\{\iota(\gamma_1),\dots, \iota(\gamma_{2g})\}$ (see Figure~\ref{fig:gammaOnSigma}), that is, $s_{ij} \in \Q$ is the linking number $\lk_Y(\iota(\gamma_i),\gamma_j^+)$ of two oriented curves $\iota(\gamma_i)$ and $\gamma_j^{+} := \iota^{+}(\gamma_j)$ in $Y$ (see \cite[Section~1.2]{Les96} for example).
Here, $S$ can be regarded as the matrix of the linear map $(\iota^+)_\ast\otimes\id_\Q \colon H_1(\Sigma_{g,1}; \Q) \to H_1(Y \setminus \Sigma; \Q)$ with respect to $\{[\gamma_1], \dots, [\gamma_{2g}]\}$ and $\{[\gamma_1'], \dots, [\gamma_{2g}']\}$, where $\gamma_i' \subset Y\setminus\Sigma$ is a meridian of $\iota(\gamma_i)$.
Indeed, by the definition of the linking number $\lk_Y$, we have
$$[\gamma_j^+] = \sum_{i=1}^{2g} \lk_Y(\iota(\gamma_i),\gamma_j^+) [\gamma_i'] \in H_1(Y \setminus \iota(\gamma_1\cup\dots\cup\gamma_{2g}); \Q) \xleftarrow{\cong} H_1(Y \setminus \Sigma; \Q).$$
Similarly, $S^T$ is regarded as the matrix of $(\iota^-)_\ast\otimes\id_\Q$.
The definition of $\lk_Y$ also implies that $S-S^T = \bigoplus^g
\begin{pmatrix}
 0 & -1 \\
 1 & 0
\end{pmatrix}
=:J$.

\begin{proposition}\label{prop:SeifertMatrix}
 $(Y\setminus\Int(\Sigma\times[-1,1]),\iota^-,\iota^+)$ is a homology cobordism if and only if $|H_1(Y)| |{\det S}| = 1$. 
\end{proposition}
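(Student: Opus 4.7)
My plan is to compare, via the Mayer--Vietoris sequence for the decomposition $Y=N\cup_{\partial N} M$ with $N=\Sigma\times[-1,1]$, the order $|H_1(Y)|$ to a determinant that can be read off from $S$. Since $Y$ is a rational homology $3$-sphere (so $H_2(Y)=0$) and $\partial N$ is the connected closed orientable surface of genus $2g$ obtained by doubling $\Sigma_{g,1}$, the Mayer--Vietoris sequence collapses to
\[ 0 \to H_1(\partial N)\xrightarrow{\phi} H_1(M)\oplus H_1(N) \to H_1(Y)\to 0,\qquad \phi([\gamma_i^\pm])=\bigl((\iota^\pm)_\ast[\gamma_i],-[\gamma_i]\bigr). \]
Here $H_1(\partial N)\cong\Z^{4g}$ has $\{[\gamma_i^+],[\gamma_j^-]\}$ as a basis (by Mayer--Vietoris on the double of $\Sigma_{g,1}$) and $H_1(N)\cong\Z^{2g}$ has basis $\{[\gamma_i]\}$. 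Rank-counting forces $H_1(M)$ to have rank $2g$, so I write $H_1(M)\cong\Z^{2g}\oplus T$ after choosing a splitting and an integral basis $\{e_i\}$ of the free summand.

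Next I would encode $\phi$ as a matrix. Let $\bar A,\bar B\in M_{2g}(\Z)$ denote the matrices of $(\iota^+)_\ast\bmod T$ and $(\iota^-)_\ast\bmod T$ with respect to $\{[\gamma_i]\}$ and $\{e_i\}$. The given Seifert data identifies $\bar A=PS$ and $\bar B=PS^T$ over $\Q$, where $P\in GL_{2g}(\Q)$ is the change-of-basis matrix from $\{[\gamma_i']\}$ to $\{e_i\}$ in $H_1(M;\Q)$. After killing the torsion summand, $\phi$ becomes $\bar\phi\colon\Z^{4g}\to\Z^{4g}$ with matrix $\left(\begin{smallmatrix}\bar A&\bar B\\ -I&-I\end{smallmatrix}\right)$. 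Subtracting each of the last $2g$ columns from the corresponding first $2g$ column block-upper-triangularizes this, giving
\[ \det\bar\phi \;=\; \det(\bar A-\bar B)\;=\;\det\bigl(P(S-S^T)\bigr)\;=\;\det P\cdot\det J\;=\;\det P. \]

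A snake-lemma argument applied to the short exact sequence $0\to T\to\Z^{4g}\oplus T\to\Z^{4g}\to 0$ together with the vertical maps $\phi$ and $\bar\phi$ then produces the exact sequence $0\to T\to H_1(Y)\to\Z^{4g}/\bar\phi(\Z^{4g})\to 0$, hence $|H_1(Y)|=|T|\cdot|\det P|$. Combined with the identity $|\det\bar A|=|\det P|\cdot|\det S|$, this yields the key formula
\[ |H_1(Y)|\cdot|\det S| \;=\; |T|\cdot|\det\bar A|. \]
Thus $|H_1(Y)|\cdot|\det S|=1$ iff $|T|=1$ and $|\det\bar A|=1$, i.e., iff $H_1(M)\cong\Z^{2g}$ and $(\iota^+)_\ast$ is a $\Z$-isomorphism; the analogous statement for $(\iota^-)_\ast$ follows since $|\det\bar B|=|\det P|\cdot|\det S|=|\det\bar A|$. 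These conditions are exactly the requirement that $(M,\iota^-,\iota^+)$ be a homology cobordism over $\Sigma_{g,1}$. The degenerate case $\det S=0$ already makes both sides of the equivalence fail.

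The main delicate point will be juggling the two $\Q$-bases of $H_1(M;\Q)$---the meridian basis $\{[\gamma_i']\}$ furnished by the setup and an integral basis $\{e_i\}$ of the free quotient $H_1(M)/T$---and using the snake lemma to isolate the separate contributions of $|T|$ and $|\det P|$ to $|H_1(Y)|$.
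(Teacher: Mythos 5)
Your proof is correct and follows essentially the same route as the paper: the Mayer--Vietoris sequence for $Y=(\Sigma\times[-1,1])\cup M$, passing to the free quotient of $H_1(M)$, and the column reduction exploiting $\bar A-\bar B=P(S-S^T)=PJ$ with $\det J=1$. The only difference is presentational --- you package both implications into the single snake-lemma identity $|H_1(Y)|\,|\det S|=|T|\,|\det\bar A|$, whereas the paper proves the two directions separately using the five lemma and the surjectivity of the induced map onto $\Coker\overline{\phi}$.
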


\begin{proof}
 Let $TH_1(Y \setminus \Sigma)$ denote the torsion subgroup of $H_1(Y \setminus \Sigma)$, and put $F:=H_1(Y \setminus \Sigma)/T \cong \Z^{2g}$.
 Let $\B$ be a basis of $F$, $A_\pm \in GL(2g,\Z)$ the matrices of $\pr \circ (\iota^\pm)_\ast \colon H_1(\Sigma_{g,1}) \to F$ with respect to $\{[\gamma_1], \dots, [\gamma_{2g}]\}$ and $\B$.
 Since $\B$ can be regarded as a basis of $F \otimes \Q = H_1(Y \setminus \Sigma; \Q)$, we have the matrix $Q \in GL(2g,\Q)$ changing the basis $\B$ to $\{[\gamma_1'], \dots, [\gamma_{2g}']\}$.
 Then one has $Q^{-1}A_{+} = S$ and $Q^{-1}A_{-} = S^T = S-J$, and hence $A_{+}-A_{-} = QJ \in M_{2g}(\Z) \cap GL(2g,\Q)$.
 
 Let $\widetilde{\Sigma}:=\Sigma\times[-1,1]$.
 The Mayer-Vietoris sequence for $Y = (Y \setminus \Int\widetilde{\Sigma}) \cup \widetilde{\Sigma}$ gives the short exact sequence
 $$0 \to H_1(\partial\widetilde{\Sigma}) \xrightarrow{\phi} H_1(Y \setminus \Int\widetilde{\Sigma}) \oplus H_1(\Sigma) \to H_1(Y) \to 0.$$
 Identifying $H_1(\partial\widetilde{\Sigma})$ with $H_1(\Sigma\times\{1\}) \oplus H_1(\Sigma\times\{-1\})$, we have the commutative diagram
 \[\xymatrix{
 0 \ar[r] & \Z^{2g}\oplus\Z^{2g} \ar[r]^-{\phi}\ar@{=}[d] & (\Z^{2g}\oplus T)\oplus\Z^{2g} \ar[r]\ar@{->>}[d]^-{\pr} & H_1(Y) \ar[r]\ar@{->>}[d]^-{\overline{\pr}} & 0 \ (\text{exact}) \\
 0 \ar[r] & \Z^{2g}\oplus\Z^{2g} \ar[r]^-{\overline{\phi}} & \Z^{2g}\oplus\Z^{2g} \ar[r] & \Coker\overline{\phi} \ar[r] & 0 \ (\text{exact}), }
 \]
 where $\overline{\phi}$ and $\overline{\pr}$ are, respectively, the induced maps by $\phi$ and $\pr$ in the diagram.
 Here, the matrix of $\overline{\phi}$ is
 $\begin{pmatrix}
 A_+ & A_- \\
 I_{2g} & I_{2g}
\end{pmatrix}
 $, which is transformed to
 $\begin{pmatrix}
 QJ & A_- \\
 O & I_{2g}
\end{pmatrix}
 $ by elementary column operations.
 
 Suppose $|H_1(Y)| |{\det S}| = 1$.
 The exactness of the lower row implies that
 $$|{\Coker\overline{\phi}}| = |{\det Q}| = |{\det(A_+S^{-1})}| = |{\det A_{+}}| |H_1(Y)|.$$
 It follows from the surjectivity of $\overline{\pr}$ that $|{\det A_{+}}| = 1$, and hence
 $$|{\det A_{-}}| = |{\det(QS^T)}| = |{\det(QS)}| = 1.$$
 Finally, the five lemma shows that $TH_1(Y \setminus \Sigma) = 0$, and thus the maps $(\iota^\pm)_\ast$ are isomorphisms.
 
 Conversely, if $Y\setminus\Int(\Sigma\times[-1,1])$ is a homology cobordism, then we have $TH_1(Y \setminus \Sigma) = 0$ and $|{\det A_\pm}|=1$.
 It follows that
 $$|H_1(Y)||{\det S}| = |{\det QJ}||{\det Q^{-1}A_{+}}| = 1.$$
 This completes the proof.
\end{proof}

\begin{proof}[Alternative proof of Lemma~\textup{\ref{lem:Sabcuv}}]
 We first compute the Seifert matrix $S=(s_{ij})$ of $\Sigma=\Sigma_{a,b,c,u,v}$ with respect to $\{\alpha,\beta\}$.
 Let $\mu''$ be a parallel copy of $\mu'$ drawn in Figure~\ref{fig:Sabcuv} with $\lk_{S^3}(\mu',\mu'')=0$.
 We see that $\lk_{L(p,q)}(\mu',\mu'')=q/p$ by the definition of Dehn surgery and the linking number, and thus
 $$s_{12} = \lk_{L(p,q)}(\alpha,\beta^{+}) = c-uv\lk_{L(p,q)}(\mu',\mu'') = c-\frac{q}{p}uv.$$
 One can compute the other entries $s_{ij}$ similarly and conclude that
\begin{align}\label{eq:SeifertMatrix}
 S =
 \begin{pmatrix}
 a+\frac{q}{p}u^2 & c-\frac{q}{p}uv \\
 c+1-\frac{q}{p}uv & b+\frac{q}{p}v^2 \\
 \end{pmatrix}.
\end{align}
 Since
 \[p\det S = -p(c^2+c-ab)+q(bu^2+(2c+1)uv+av^2),\]
 if the complement is a homology cobordism, then Proposition~\ref{prop:SeifertMatrix} shows that there are $\varepsilon$ and $k$ as in Lemma~\ref{lem:Sabcuv}.
 Conversely, the existence of $\varepsilon$ and $k$ implies that $p|{\det S}| = 1$.
\end{proof}

The following terminology was introduced by Goda and Sakasai \cite[Definition~3.1]{GoSa13} in the case $X=S^3$ (see also \cite[Definition~7.1]{Sak17}).

\begin{definition}\label{def:HFK}
 An oriented knot $K$ in a connected oriented closed 3-manifold $X$ is called a \emph{homologically fibered knot of genus $g$} if there is a Seifert surface of $K$ such that $X\setminus\Int(\Sigma\times[-1,1])$ is a homology cobordism over $\Sigma_{g,1}$.
\end{definition}

By definition, $\hc(X)=g$ if and only if $X$ contains a homologically fibered knot of genus $g$, but does not contain one of genus $g-1$.

\begin{remark}
 If $X$ is a rational homology 3-sphere, then Corollary~\ref{cor:AlexanderPolynomial} shows that $g$ in Definition~\ref{def:HFK} must be equal to the \emph{knot genus} $g(K)$ of $K$.
\end{remark}

We next see that homologically fibered knots are characterized by the Alexander polynomial.
Let $K$ be an oriented knot in a rational homology 3-sphere $Y$.
We define the \emph{Alexander polynomial} $\Delta_K(t)$ of $K$ by
\[\Delta_K(t) := |H_1(Y)|\det(t^{1/2}S-t^{-1/2}S^T) \in \Z[t,t^{-1}],\]
where $S$ is a Seifert matrix of a Seifert surface $\Sigma$ of $K$ (see, for example, \cite[Proposition~2.3.13]{Les96}).
By definition, $\Delta_K(t)$ should be palindromic and satisfy $\Delta_K(1) = |H_1(Y)|$, and its breadth is less than or equal to $2g(K)$.

\begin{example}
 Suppose that $L(p,q) \setminus \Int(\Sigma_{a,b,c,u,v}\times[-1,1])$ is a homology cobordism (see Figure~\ref{fig:Sabcuv}).
 Then there exists $\varepsilon \in \{1,-1\}$ as in Lemma~\ref{lem:Sabcuv}, and using the Seifert matrix \eqref{eq:SeifertMatrix}, one can compute the Alexander polynomial of $K=\partial\Sigma_{a,b,c,u,v}$ in $L(p,q)$:
 \[\Delta_K(t) = p-\varepsilon(t-2+t^{-1}).\]
\end{example}

The next result is a corollary of Proposition~\ref{prop:SeifertMatrix}, which is well-known in the case $Y=S^3$ (see \cite[Proposition~3.2]{GoSa13}).

\begin{corollary}\label{cor:AlexanderPolynomial}
 An oriented knot $K$ in $Y$ is homologically fibered if and only if $\Delta_K(t)$ is monic \textup{(}up to sign\textup{)} and its breadth equals $2g(K)$.
\end{corollary}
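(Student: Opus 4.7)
The plan is to read off Proposition~\ref{prop:SeifertMatrix} from the extreme coefficients of $\Delta_K(t)$. First I would factor $t^{-1/2}$ out of each column of $t^{1/2}S-t^{-1/2}S^T$ to rewrite
\[
\Delta_K(t)=|H_1(Y)|\,t^{-g_\Sigma}\det(tS-S^T),
\]
where $\Sigma$ is a Seifert surface of $K$ of genus $g_\Sigma$ with Seifert matrix $S$. Expanding the right-hand side as a polynomial in $t$ of degree at most $2g_\Sigma$, its top coefficient is $\det S$ and its constant term is $\det(-S^T)=\det S$. Consequently the leading coefficient of $\Delta_K(t)$ is $|H_1(Y)|\det S$, and the breadth of $\Delta_K(t)$ equals $2g_\Sigma$ precisely when $\det S\neq 0$, and is strictly smaller otherwise.

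For the forward direction I would pick a Seifert surface $\Sigma$ witnessing the homological fibration, of some genus $g_\Sigma$. Proposition~\ref{prop:SeifertMatrix} immediately gives $|H_1(Y)|\,|\det S|=1$, so $\Delta_K(t)$ is monic up to sign and has breadth exactly $2g_\Sigma$. To identify $g_\Sigma$ with $g(K)$, I would apply the same formula to a minimal genus Seifert surface of $K$, which bounds the breadth of $\Delta_K(t)$ from above by $2g(K)$; combined with the trivial inequality $g_\Sigma\geq g(K)$, this forces $g_\Sigma=g(K)$, and hence breadth equal to $2g(K)$.

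For the converse I would start with a minimal genus Seifert surface $\Sigma_0$ of $K$ and its Seifert matrix $S_0$ of size $2g(K)\times 2g(K)$. The breadth hypothesis forces $\det S_0\neq 0$, since otherwise the dichotomy from the first paragraph would produce breadth strictly below $2g(K)$; the monicity hypothesis then reads $|H_1(Y)|\,|\det S_0|=1$, and Proposition~\ref{prop:SeifertMatrix} identifies $\Sigma_0$ as witnessing $K$ as a homologically fibered knot. No step is a serious obstacle; the only mildly delicate point is that the bound ``breadth at most $2g(K)$'' invoked in the forward direction should be derived from the same determinantal formula applied to a minimal Seifert surface, rather than imported from the classical $Y=S^3$ inequality.
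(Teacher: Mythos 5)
Your proposal is correct and follows essentially the same route as the paper: both arguments reduce the corollary to Proposition~\ref{prop:SeifertMatrix} via the observation that the extreme coefficients of $\det(t^{1/2}S-t^{-1/2}S^T)$ equal $\det S$, so that monicity together with full breadth is equivalent to $|H_1(Y)|\,|{\det S}|=1$. Your version merely spells out the identification of the genus of the witnessing surface with $g(K)$, which the paper leaves implicit (and records separately as a remark).
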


\begin{proof}
 In general, for $S \in GL(2g,\Q)$, the highest degree term of $\det(t^{1/2}S-t^{-1/2}S^T)$ is equal to $(\det S)t^g$.
 Therefore, Proposition~\ref{prop:SeifertMatrix} proves the corollary.
 \end{proof}

Finally, we complete the rest of the proof of Corollary~\ref{cor:List_of_hc} by using Theorem~\ref{thm:hc=1} and Proposition~\ref{prop:SeifertMatrix}.

\begin{figure}[h]
 \centering
 \includegraphics[height=10em]{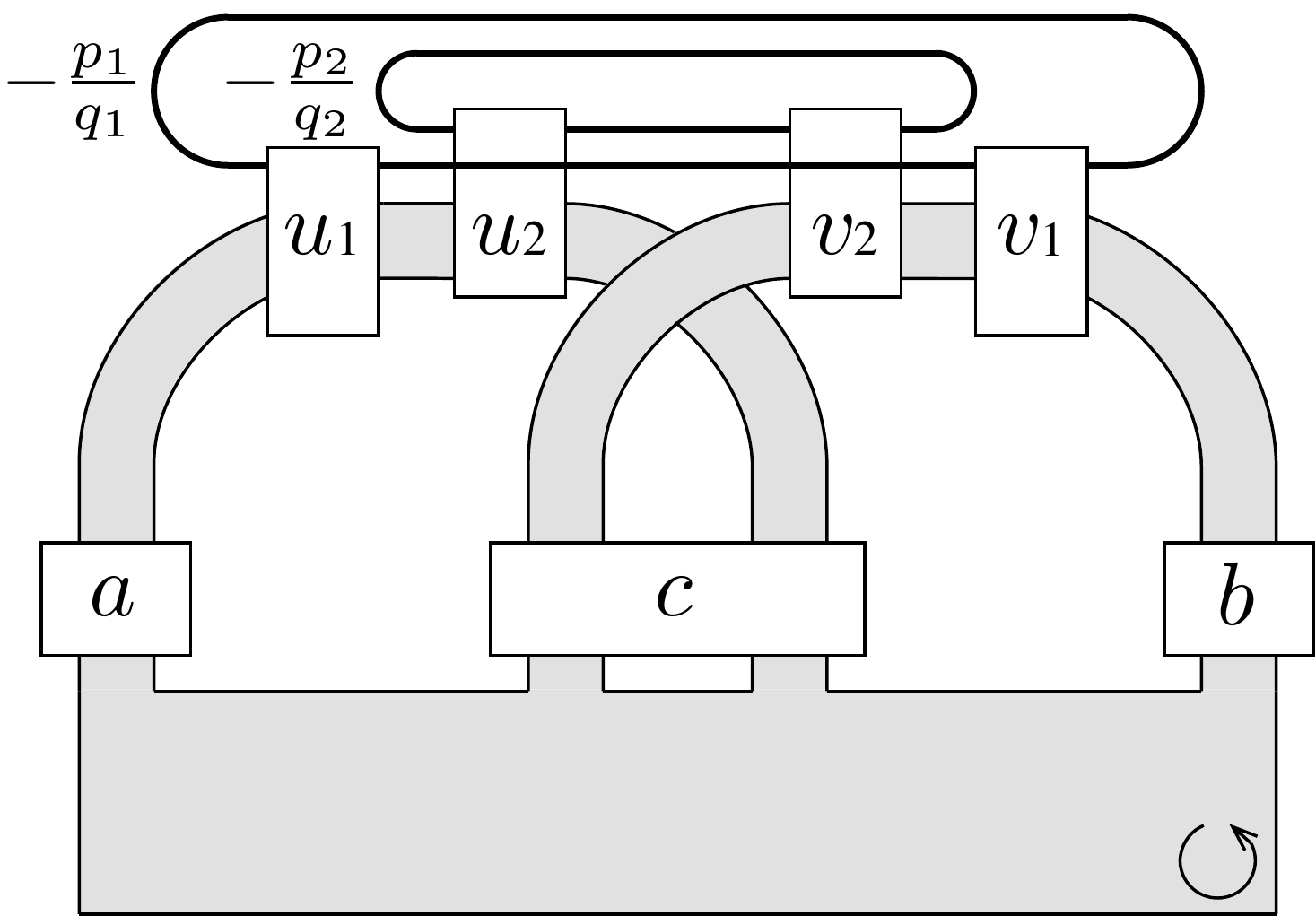}
 \caption{The surface $\Sigma_{a,b,c,u_1,u_2,v_1,v_2} \subset \nobreak L(p_1,q_1)\sharp L(p_2,q_2)$, where the indices of $\Sigma$ are integers like Figure~\ref{fig:Sabcuv}.} 
 \label{fig:Sabcuuvv}
\end{figure}

\begin{proof}[Proofs of Corollary~\textup{\ref{cor:List_of_hc}} \textup{(5) and (6)}.]
 (5) For $X=L(p_1,q_1) \sharp L(p_2,q_2)$ with $p_1 \mid p_2$, we know that $\hc(X)$ equals 1 or 2 by Theorem~\ref{thm:hc=1}.
 Suppose that $\hc(X)=1$, namely there is $\Sigma_{1,1}$ in $X$ whose complement is a homology cobordism.
 We find a surface of the form $\Sigma_{a,b,c,u_1,u_2,v_1,v_2}$ (see Figure~\ref{fig:Sabcuuvv}) whose Seifert matrix $S$ is same as a Seifert matrix of $\Sigma_{1,1}$.
 Then, by an argument similar to the alternative proof of Lemma~\ref{lem:Sabcuv}, one concludes that
 \[S = 
 \begin{pmatrix}
 a+\frac{q_1}{p_1}u_1^2+\frac{q_2}{p_2}u_2^2 & c-\frac{q_1}{p_1}u_1v_1-\frac{q_2}{p_2}u_2v_2 \\
 c+1-\frac{q_1}{p_1}u_1v_1-\frac{q_2}{p_2}u_2v_2 & b+\frac{q_1}{p_1}v_1^2+\frac{q_2}{p_2}v_2^2 \\
 \end{pmatrix}
 \in \frac{1}{p_2}M_2(\Z).\]
 It follows from Proposition~\ref{prop:SeifertMatrix} that
 \[1 = |H_1(X)||{\det S}| \equiv \varepsilon q_1q_2(u_1u_2-v_1v_2)^2 \mod p_1\]
 for some $\varepsilon \in \{1,-1\}$.
 Thus, $q_1q_2$ or $-q_1q_2$ is a quadratic residue $\tmod p_1$.
 
 (6) Theorem~\ref{thm:Sakasai} allows us to assume that $X=(S^1\times S^2) \sharp L(p,q)$.
 By Theorem~\ref{thm:hc=1}, it suffices to prove that $\hc(X)=1$ if and only if $q$ or $-q$ is a quadratic residue $\tmod p$.
 Suppose that $\hc(X)=1$.
 There is $A \in SL(2,\Z)$ such that the closure $C$ of the homology cobordism $(\Sigma_{1,1}\times[-1,1],\incl,\widetilde{A})$ is Borromean surgery equivalent to $X$, where $\widetilde{A}$ is a homeomorphism of $\Sigma_{1,1}$ inducing $A$ on $H_1(\Sigma_{1,1})$ (see \cite[Section~6.3]{Sak17}).
 Here we have $\det(A-I_2)=0$ since the Mayer-Vietoris sequence for $C = \Sigma_{1,1}\times[-1,0] \cup \Sigma_{1,1}\times[0,1]$ shows that $\Coker(A-I_2) \cong H_1(C)$.
 Therefore, by the proof of \cite[Proposition~2]{TaOc82}, $A$ is conjugate to 
$\begin{pmatrix}
 1 & 0 \\
 u & 1
\end{pmatrix}$
 in $SL(2,\Z)$ for some $u \in \Z$.
 Now, $u$ must be $\varepsilon p$ for some $\varepsilon \in \{1,-1\}$, and one can choose $\widetilde{A}$ so that $C \cong (S^1\times S^2)\sharp L(p,\varepsilon)$.
 Thus the torsion linking form $(q/p)$ is isomorphic to $(\varepsilon/p)$.
 
 In particular, the above argument implies that $(S^1\times S^2)\sharp L(p,\varepsilon)$ is the closure of a homology cobordism over $\Sigma_{1,1}$ for each $\varepsilon=\pm1$, which proves the converse.
\end{proof}

\end{document}